\documentclass[12]{amsart}
\usepackage{amsmath,amssymb}
\usepackage{amsthm,color,enumerate,comment,centernot,enumitem,url,cite}
\usepackage{graphicx,relsize,bm}
\usepackage{mathtools}
\usepackage{array}

\usepackage{tkz-graph}
\usetikzlibrary{arrows}
\usetikzlibrary{arrows.meta,
                decorations.markings}

\usepackage{enumitem}
\setenumerate[0]{label=(\alph*)}

\makeatletter
\newcommand{\tpmod}[1]{{\@displayfalse\pmod{#1}}}

\makeatother

\newtheorem{thm}{Theorem}[section]
\newtheorem{lemma}[thm]{Lemma}

\theoremstyle{definition}

\theoremstyle{remark}

\theoremstyle{definition}
    \newtheorem{defn}[thm]{Definition}

\newtheorem{rem}[thm]{Remark}

\newcommand{\abs}[1]{\left|{#1}\right|}

\def\R {{\mathbb R}}
\def\Z {{\mathbb Z}}

\def\Q {{\mathbb Q}}

\def\C {{\mathcal C}}

\def\Z {{\mathbb Z}}
\def\Q {{\mathbb Q}}
\def\C {{\mathbb C}}

\def\CC {{\mathcal C}}

\makeatletter
\@namedef{subjclassname@2020}{%
  \textup{2020} Mathematics Subject Classification}
\makeatother

\def\red#1 {\textcolor{red}{#1 }}
\def\blue#1 {\textcolor{blue}{#1 }}

\numberwithin{equation}{section}

\def\Z {{\mathbb Z}}
\begin{document}

\title[Monogenic Strictly-Perron Polynomials]{Monogenic Strictly-Perron Polynomials}


\author{Lenny Jones}
\address{Professor Emeritus, Department of Mathematics, Shippensburg University, Shippensburg, Pennsylvania 17257, USA}
\email[Lenny~Jones]{doctorlennyjones@gmail.com}

\date{\today}

\begin{abstract} A monic polynomial $f(x)\in {\mathbb Z}[x]$ of degree $n$ is called \emph{monogenic} if $f(x)$ is irreducible over ${\mathbb Q}$ and $\{1,\theta,\theta^2,\ldots ,\theta^{n-1}\}$ is a basis for the ring of integers of ${\mathbb Q}(\theta)$, where $f(\theta)=0$. A {\em strictly-Perron} polynomial is the minimal polynomial of a Perron number $\lambda$ such that $\lambda$ is neither a Pisot number, an anti-Pisot number, nor a Salem number. 
For any natural number $n\ge 2$, we prove that there exist infinitely many monogenic strictly-Perron polynomials of degree $n$.  
   \end{abstract}

\subjclass[2020]{Primary 11R04, 11R06; Secondary 11R09, 12F05}
\keywords{monogenic, Perron, Pisot, Salem} 

\maketitle
\section{Introduction}\label{Section:Intro}

Suppose that $f(x)\in {\mathbb Z}[x]$ of degree $n\ge 2$ is monic and irreducible over $\Q$. Then, we define $f(x)$ to be \emph{monogenic} if $\Theta=\{1,\theta,\theta^2,\ldots ,\theta^{n-1}\}$ is a basis for the ring of integers ${\mathbb Z}_K$ of $K={\mathbb Q}(\theta)$, where $f(\theta)=0$. Such a basis $\Theta$ is commonly referred to in the literature as a \emph{power basis}.
It is well known \cite{Cohen} that 
\begin{equation} \label{Eq:Dis-Dis}
\Delta(f)=\left[\Z_K:\Z[\theta]\right]^2\Delta(K),
\end{equation}
 where $\Delta(f)$ and $\Delta(K)$ denote, respectively, the discriminants over $\Q$ of $f(x)$ and $K$. Thus, we see that $f(x)$ is monogenic if and only if $\left[\Z_K:\Z[\theta]\right]=1$ or, equivalently, $\Delta(f)=\Delta(K)$. Hence, if $\Delta(f)$ is squarefree, then $f(x)$ is monogenic. However, the converse is false, and it is also possible that a power basis for $\Z_K$ exists even if $f(x)$ is not monogenic. For a nontrivial example, see \cite{JonesWhite}.

A {\em Perron number}, as defined by Lind in \cite{Lind1}, is a real algebraic integer $\lambda>1$ such that $\abs{\alpha}<\lambda$ for all Galois conjugates $\alpha$ of $\lambda$, with $\alpha\ne \lambda$. We caution the reader that, for technical reasons, Lind modified the original definition of a Perron number in \cite{Lind2,Lind3}, by allowing 1 to be a Perron number. In this paper, we use the original definition of a Perron number given in \cite{Lind1}, and we define a {\em Perron polynomial} to be the minimal polynomial of a Perron number. 
 The concept of a Perron number is closely related to a theorem due to Perron \cite{Perron} in which he showed that a square matrix, whose entries are all positive real numbers, has a unique dominant positive real eigenvalue $\lambda$. Thus, if $\lambda>1$, then $\lambda$ is a Perron number. 
 
 Frobenius \cite{F} subsequently extended Perron's theorem to matrices whose entries are nonnegative real numbers, provided the matrix cannot be transformed into block upper-triangular form through a permutation of rows and columns. Such a matrix is called {\em irreducible}, and {\em reducible} otherwise \cite{HJ}.  
This generalization of Perron's theorem, due to Frobenius, is known as the Perron-Frobenius Theorem \cite{HJ,Meyer}.  
 The Perron-Frobenius Theorem  has many applications \cite{Ding,Elden,K1,K2,Li,Lind1,Lind2,Lind3,Mac,Meyer,Thiemann} such as Markov chains, dynamical systems, chaos theory and internet search engines, to name a few.

The set of all Perron numbers contains all Pisot numbers \cite{Bertin} and all Salem numbers \cite{Bertin,Smyth}, which are of considerable interest themselves. A \emph{Pisot} (or \emph{Pisot-Vijayaraghavan}) number \cite{Bertin} is a real algebraic integer $\lambda>1$ such that all other zeros of the minimal polynomial $f(x)\in \Z[x]$ of $\lambda$ lie inside the unit circle. We call $f(x)$ a \emph{Pisot polynomial}, which of course is also a Perron polynomial. The monogenicity of Pisot polynomials was investigated by the author in \cite{JonesPisot}. A {\em Salem} number is a real algebraic integer $\lambda>1$ of even degree at least 4 \cite{Boyd}, such that $\lambda^{-1}$ is a Galois conjugate of $\lambda$, and all Galois conjugates of $\lambda$ other than $\lambda$ and $\lambda^{-1}$ have modulus exactly 1. 
The minimal polynomial of a Salem number is called a  {\em Salem polynomial}, 
and it is also a Perron polynomial.    

Yet another type of number that has been investigated is an {\em anti-Pisot} number \cite{SS},  which is a real algebraic integer $\lambda > 1$ such that exactly one Galois conjugate of $\lambda$ has modulus less than 1 and at least one Galois conjugate other than $\lambda$ has modulus greater than 1. An {\em anti-Pisot polynomial} is the minimal polynomial of an anti-Pisot number. Monogenic anti-Pisot polynomials were presented in \cite{JonesPisot}. 

In this article, we extend the work in \cite{JonesPisot} by proving that, for any natural number $n\ge 2$, there exist infinitely many monogenic Perron polynomials of degree $n$ that are neither Pisot, nor anti-Pisot nor Salem. We call such Perron polynomials {\em strictly-Perron} polynomials, and their associated Perron numbers {\em strictly-Perron} numbers. More precisely, our main theorem is the following:
\begin{thm}\label{Thm:Main}
Let $a,n\in \Z$, with $a\ge 1$ and $n\ge 2$. If $\gcd(a,n)=1$, then there exist infinitely many primes $p$ such that 
      \[f_{n,a,p}(x)=x^n-ax^{n-1}-p\]
   is a monogenic strictly-Perron polynomial.   
\end{thm}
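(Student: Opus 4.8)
\emph{Overview and reductions.} The plan is to work with primes $p$ satisfying $p\nmid a$ and $p>a+1$, excluding only finitely many primes, and to show that for such $p$ the polynomial $f:=f_{n,a,p}(x)=x^n-ax^{n-1}-p$ is irreducible and strictly-Perron, and that it is monogenic whenever the integer $D:=n^np+(n-1)^{n-1}a^n$ is squarefree. The theorem then reduces to a squarefree-values statement about $D$, which I expect to be the only genuinely difficult point. For irreducibility: since $p\nmid a$, reduction modulo $p$ gives $\overline f=x^{n-1}(x-a)$ with $x$ and $x-a$ coprime in $\F_p[x]$; equivalently, the $p$-adic Newton polygon of $f$ has one edge of slope $-1/(n-1)$ and horizontal length $n-1$ and one edge of slope $0$ and length $1$. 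As $\gcd(1,n-1)=1$, the first edge corresponds to a single irreducible factor of degree $n-1$ over $\Q_p$, so any proper factorization of $f$ over $\Q$ must produce a linear factor; the rational root theorem leaves only $\pm 1,\pm p$ as candidates, each excluded by $p>a+1$, so $f$ is irreducible. For the Perron property I would use that $f'(x)=x^{n-2}(nx-a(n-1))$ and $f(a)=-p<0$, whence $f$ has a unique real root $\lambda$ with $\lambda>a\ge 1$ (so $\lambda>1$), is positive on $(\lambda,\infty)$, and has exactly one further real root $\mu<0$ when $n$ is even and none when $n$ is odd. For any root $\alpha$ of $f$ one has $\abs{\alpha}^{n-1}\abs{\alpha-a}=p=\lambda^{n-1}(\lambda-a)$; if $\abs{\alpha}>\lambda$ the left side strictly exceeds the right (using $\lambda>a$), and if $\abs{\alpha}=\lambda$ the circles $\abs{z}=\lambda$ and $\abs{z-a}=\lambda-a$ are internally tangent at $z=\lambda$, forcing $\alpha=\lambda$. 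Hence $\lambda$ is a Perron number.

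\emph{Ruling out Pisot, Salem, and anti-Pisot.} Since $\abs{f(0)}=p\ne 1$ while the roots of a Salem polynomial have product of modulus $1$, $f$ is not Salem. If $f$ were Pisot, then $p=\abs{f(0)}=\prod_i\abs{\alpha_i}<\lambda$ because the $n-1\ge 1$ conjugates other than $\lambda$ all have modulus $<1$; but $f(p)>0$ for $p>a+1$, so $\lambda<p$, a contradiction. Finally, an anti-Pisot polynomial has exactly one conjugate of modulus $<1$, necessarily real (non-real conjugates occur in modulus-preserving pairs); but the only real root besides $\lambda$ is $\mu$, present only for $n$ even, and $f(-1)=1+a-p<0$ for $p>a+1$ forces $\mu<-1$, so $\abs{\mu}>1$. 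Hence the number of conjugates of modulus $<1$ is even, $f$ is not anti-Pisot, and $f$ is strictly-Perron.

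\emph{Monogenicity.} By the standard formula for the discriminant of a trinomial (noting $\gcd(n,n-1)=1$) one computes $\Delta(f)=\pm\,p^{\,n-2}D$ with $D=n^np+(n-1)^{n-1}a^n$. If a prime $q\ne p$ divided $\left[\Z_K:\Z[\theta]\right]$ then $q^2\mid\Delta(f)$, hence $q^2\mid D$; so if $D$ is squarefree, no prime other than $p$ divides the index. For $p$ itself I would apply Dedekind's criterion: with $\overline f=x^{n-1}(x-a)$ the repeated irreducible factor is $x$, and taking $g=x(x-a)$, $h=x^{n-2}$ gives $f-gh=-p$, so $(f-gh)/p=-1$ is a unit modulo $p$ and is not divisible by $x$; hence $p\nmid\left[\Z_K:\Z[\theta]\right]$. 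Thus squarefree $D$ implies $\left[\Z_K:\Z[\theta]\right]=1$ and $f$ is monogenic.

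\emph{Squarefree values of $D$ — the main obstacle.} It remains to produce infinitely many primes $p$ with $D=D(p)=n^np+(n-1)^{n-1}a^n$ squarefree, and this is the hard part. The hypothesis $\gcd(a,n)=1$ ensures $D$ has no fixed square factor: if $q\mid n$ then $q^2\mid n^n$ but $q\nmid(n-1)a$, so $q\nmid D(p)$ for every $p$, while for $q\nmid n$ the condition $q^2\mid D(p)$ pins $p$ to a single class modulo $q^2$. I would then bound $\#\{p\le x:D(p)\text{ not squarefree}\}$ by a union bound over primes $q$, split at $z=(\log x)^2$. For $q>z$, the trivial estimate $\#\{p\le x:q^2\mid D(p)\}\le x/q^2+O(1)$ together with $\sum_{q>z}q^{-2}\ll(z\log z)^{-1}$ and $\pi(\sqrt{n^nx})\ll\sqrt x/\log x$ gives a contribution $o(\pi(x))$. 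For $q\le z$, the modulus $q^2\le(\log x)^4$ lies in the Siegel--Walfisz range, so $\#\{p\le x:q^2\mid D(p)\}=\pi(x)/\phi(q^2)+O\!\left(x\exp(-c\sqrt{\log x})\right)$ (or $O(1)$ when $q\mid n(n-1)a$), and summing yields at most $\big(\sum_q\tfrac1{q(q-1)}+o(1)\big)\pi(x)$. Since $\sum_q\tfrac1{q(q-1)}<1$, this forces $\#\{p\le x:D(p)\text{ squarefree}\}\gg\pi(x)\to\infty$, and removing the finitely many excluded $p$ completes the proof. The delicate point throughout is precisely this last estimate; everything else is the trinomial discriminant formula, Dedekind's criterion, a Newton polygon, and elementary calculus, so in a full write-up I would isolate the squarefree-values statement as a preliminary lemma.
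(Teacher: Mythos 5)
Your argument is correct, and it arrives at the theorem by genuinely different means at almost every step, so a comparison is worthwhile. For irreducibility the paper (Lemma \ref{Lem:New irreducible}) uses Perron's criterion together with the large-prime-constant-term criterion of \cite{JonesActa} and classifies exactly when $f_{n,a,p}$ is reducible, whereas your Newton-polygon-plus-rational-root argument discards the finitely many primes with $p\le a+1$ or $p\mid a$; that loss is harmless here. For monogenicity the paper runs the full Jakhar--Khanduja--Sangwan trinomial criterion (Theorem \ref{Thm:JKS2}) and obtains an if-and-only-if characterization, while your combination of the discriminant factorization $\Delta(f)=\pm p^{n-2}D$ with Dedekind's criterion at the single prime $p$ yields only the sufficient direction --- which is all the theorem needs --- and is arguably more transparent. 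For the Perron property the paper passes through the companion matrix, strong connectivity of its digraph, and the Perron--Frobenius theorem (Lemma \ref{Lem:PFP}), whereas your direct estimate $\abs{\alpha}^{n-1}\abs{\alpha-a}=p=\lambda^{n-1}(\lambda-a)$, with the internal-tangency observation handling the boundary case $\abs{\alpha}=\lambda$, is elementary and self-contained. The exclusions of Salem, Pisot, and anti-Pisot are essentially the same in both treatments (norm and Descartes-type arguments), your parity count for the anti-Pisot case being a slight streamlining of the paper's. Finally, you correctly identify the squarefree values of $D(p)=n^np+(n-1)^{n-1}a^n$ at prime arguments as the real analytic content: the paper imports this wholesale as Lemma \ref{Lem:Linear} from \cite{JonesActa}, while your sketch (no fixed square divisor from $\gcd(a,n)=1$, a union bound split at $z=(\log x)^2$, trivial counting for $q>z$, Siegel--Walfisz for $q\le z$, and $\sum_q 1/(q(q-1))<1$) is a correct standalone proof of that lemma. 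In a full write-up you would indeed want to isolate that sieve as a lemma, exactly as the paper does.
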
 
\noindent  

\begin{rem}
Since any positive integer power of a Perron number is also a Perron number \cite[Lemma 2.3]{HS},  
 the monogenicity of a Perron polynomial $f(x)$ of degree $n$, with corresponding Perron number $\lambda$, implies that the ring of integers of $\Q(\lambda)$  has the power basis $\{1,\lambda,\lambda^2,\ldots,\lambda^{n-1}\}$, where each basis element other than 1 is a Perron number.
\end{rem}

\section{Preliminaries}\label{Section:Prelim}
For the convenience of the reader, we summarize in Table \ref{T:1} the properties of Perron, Pisot, Salem and anti-Pisot numbers $\lambda$ and their Galois conjugates. 
    \begin{table}[h]
 \begin{center}
\begin{tabular}{cc}
 $\lambda$ &  Conditions on the zeros of the minimal polynomial for $\lambda$  \\ [2pt] \hline
 Perron & $\lambda>1$, $\abs{\alpha}<\lambda$ when $\alpha\ne \lambda$ \\ [2pt]
 Pisot &  $\lambda>1$, $\abs{\alpha}<1$ when $\alpha\ne \lambda$ \\ [2pt]
 Salem  & $\lambda>1$, $2\mid \deg(\lambda)\ge 4$, $\lambda^{-1}$ is a conjugate, $\abs{\alpha}=1$ when $\alpha\not \in \{\lambda,\lambda^{-1}\}$ \\ [2pt]
 anti-Pisot & $\lambda>1$, $\abs{\alpha}<1$ for exactly one conjugate $\alpha$, \\ [1pt]
  & $\abs{\beta}>1$ for at least one conjugate $\beta$ with $\beta\ne \lambda$.\\ [2pt]
 \end{tabular}
\end{center}
\caption{Properties of Perron, Pisot, Salem and anti-Pisot numbers}
 \label{T:1}
\end{table}

Throughout this paper, we let 
\begin{equation}\label{Eq:f_p(x)}
f_{n,a,p}(x):=x^n-ax^{n-1}-p,
\end{equation} where $n,a,p\in \Z$ with $n\ge 2$, $a\ge 1$ and $p$ prime.
 Using the formula for the discriminant of an arbitrary trinomial \cite{Swan}, we get 
\begin{equation}\label{Eq:Delta(f_p)}
\Delta(f_{n,a,p})=(-1)^{(n-1)(n+2)/2}p^{n-2}\left(n^{n}p+a^n(n-1)^{n-1}\right).
\end{equation}

\begin{lemma}\label{Lem:irreducible}
  Let $a_0,n\in \Z$ with $a_0\ne 0$ and $n\ge 2$. Let
  \[f(x)=x^n+a_{n-1}x^{n-1}+a_{n-2}x^{n-2}+\cdots +a_1x+a_0\in \Z[x].\] 
  \begin{enumerate}
    \item \label{Perron} If either 
    \begin{align*}
    \begin{split}
      \abs{a_{n-1}}&>1+\abs{a_{n-2}}+\abs{a_{n-3}}+\cdots +\abs{a_0},\\
      &\qquad \mbox{or} \\
      \abs{a_{n-1}}&=1+\abs{a_{n-2}}+\abs{a_{n-3}}+\cdots +\abs{a_0}, \ \mbox{with $f(\pm 1)\ne 0$},
      \end{split}
    \end{align*}
    then $f(x)$ is irreducible over $\Q$.
    \item \label{JonesActa} If $\abs{a_0}$ is prime with
  \begin{equation*}\label{Eq:condition}
  \abs{a_0}>1+\abs{a_{n-1}}+\abs{a_{n-2}}+\cdots+\abs{a_1},
  \end{equation*}
  then $f(x)$ is irreducible over $\Q$.
  \end{enumerate}
  \end{lemma}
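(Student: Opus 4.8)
The plan is to treat both parts by the same device. Suppose $f(x)$ is reducible over $\Q$. Since $f$ is monic, Gauss's lemma gives a factorization $f=gh$ with $g,h\in\Z[x]$ monic of degree $\ge 1$, and from $g(0)h(0)=a_0\ne 0$ neither constant term vanishes. For a monic polynomial the absolute value of its constant term equals the product of the absolute values of its roots, so the goal in each case is to exhibit a factor of degree $\ge 1$ all of whose roots lie strictly inside the unit disc; its constant term would then be a nonzero integer of absolute value $<1$, which is impossible.

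Part~\eqref{JonesActa} is the quick one. Since $\abs{a_0}=\abs{g(0)}\,\abs{h(0)}$ is prime, one factor, say $h$, has $\abs{h(0)}=1$, so the product of the absolute values of the roots of $h$ is $1$ and $h$ has a root $\alpha$ with $0<\abs{\alpha}\le 1$. Substituting $\alpha$ into $f$ and isolating $a_0$ gives, by the triangle inequality and $\abs{\alpha}\le 1$,
\[
\abs{a_0}\le\abs{\alpha}^n+\abs{a_{n-1}}\abs{\alpha}^{n-1}+\cdots+\abs{a_1}\abs{\alpha}\le 1+\abs{a_{n-1}}+\cdots+\abs{a_1},
\]
contradicting the hypothesis; there is no real obstacle here.

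For part~\eqref{Perron} the content is to show that $f$ has exactly $n-1$ roots in the open unit disc, so that a single root $\lambda$ (which then satisfies $\abs{\lambda}>1$) lies on or outside the unit circle; in a nontrivial monic factorization $f=gh$ the root $\lambda$ then belongs to just one factor, the other factor has degree $\ge 1$ with all roots strictly inside the unit disc, and we conclude as above. To count roots I would write $f(x)=x^{n-1}(x+a_{n-1})+r(x)$ with $r(x)=a_{n-2}x^{n-2}+\cdots+a_0$ and compare $f$ with $g_0(x)=x^{n-1}(x+a_{n-1})$ on $\abs{x}=1$, where
\[
\abs{g_0(x)}=\abs{x+a_{n-1}}\ge\abs{a_{n-1}}-1\ge\abs{a_{n-2}}+\cdots+\abs{a_0}\ge\abs{r(x)};
\]
here $\abs{a_{n-1}}>1$ (automatic in the strict case, forced by $a_0\ne 0$ in the equality case), so $-a_{n-1}$ lies outside the closed disc and $g_0$ has exactly $n-1$ zeros in $\abs{x}<1$. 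In the strict case the final inequality above is strict, so Rouché's theorem applies directly and gives the claim. In the equality case I would first rule out any zero of $f$ on $\abs{x}=1$: chasing the equality condition in the triangle inequality applied to $f(\zeta)=0$ with $\abs{\zeta}=1$ forces $\zeta=-a_{n-1}/\abs{a_{n-1}}\in\{1,-1\}$, which is excluded by the hypothesis $f(\pm1)\ne0$; then the symmetric form of Rouché's theorem — or, equivalently, a homotopy argument showing $g_0+tr$ has no zero on $\abs{x}=1$ for any $t\in[0,1]$ — yields the same count of $n-1$ interior zeros. I expect the main obstacle to be precisely this borderline equality case, where Rouché is not available off the shelf and one genuinely needs $f(\pm1)\ne 0$ to keep the zeros of $f$ off the unit circle.
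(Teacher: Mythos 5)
Your argument is correct, but note that the paper does not actually prove this lemma: it simply attributes part \eqref{Perron} to Perron and part \eqref{JonesActa} to the author's earlier Acta Arithmetica paper, so there is no in-text proof to match. Your proof of \eqref{JonesActa} is exactly the standard one from the cited source (a monic factor with constant term $\pm1$ has a root $\alpha$ with $0<\abs{\alpha}\le 1$, and evaluating $f(\alpha)=0$ violates the dominance of $\abs{a_0}$). For \eqref{Perron}, your Rouch\'e route is a legitimate and complete alternative to the classical elementary root-localization proof: the strict case is immediate, and in the equality case your two key observations both hold --- $\abs{a_{n-1}}\ge 1+\abs{a_0}\ge 2$ guarantees $-a_{n-1}$ lies outside the closed disc, and a boundary zero $\zeta$ would force equality in $\abs{\zeta+a_{n-1}}\ge\abs{a_{n-1}}-1$, hence $\zeta=\pm1$, which $f(\pm1)\ne0$ excludes; with no zeros of $f$ or of $x^{n-1}(x+a_{n-1})$ on $\abs{x}=1$ and $\abs{r}\le\abs{g_0}$ there, the symmetric form of Rouch\'e does apply and yields exactly $n-1$ zeros in the open disc. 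The concluding step (the factor not containing the lone zero of modulus $>1$ has integer constant term of absolute value $<1$) is sound since $a_0\ne0$. In short: correct and self-contained, filling in proofs the paper only cites.
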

\noindent In Lemma \ref{Lem:irreducible}, \ref{Perron} is due to Perron \cite{Perron}, while a proof of \ref{JonesActa} can be found in \cite{JonesActa}.

\begin{lemma}{\rm \cite{JonesActa}}\label{Lem:Linear}
  Suppose that $G(t)=at+b\in \Z[t]$, where $\gcd(a,b)=1$.  
  Then there exist infinitely many primes $p$ such that $G(p)$ is squarefree.
\end{lemma}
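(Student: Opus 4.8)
The plan is to prove the lemma by a sieve argument that treats small and large prime-square divisors of $G(p)=ap+b$ separately, using the Chinese Remainder Theorem to dispose of the small primes exactly and the Brun--Titchmarsh inequality to control the large ones. First I would dispose of the degenerate cases: if $a=0$ or $b=0$, then $\gcd(a,b)=1$ forces $G(p)\in\{\pm 1,\pm p\}$, which is squarefree for every prime $p$, so I may assume $a\ne 0$ and $b\ne 0$. The key elementary observation is that a prime $q$ can divide $G(p)=ap+b$ only when $q\nmid a$: indeed, if $q\mid a$ then $ap+b\equiv b\pmod q$, and $\gcd(a,b)=1$ gives $q\nmid b$, so $q\nmid ap+b$. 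Thus only the primes $q\nmid a$ can contribute a square factor, and for each such $q$ the congruence $ap+b\equiv 0\pmod{q^2}$ determines a single residue class $c_q\equiv -ba^{-1}\pmod{q^2}$ that $p$ must avoid.

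Next I would fix a large but constant parameter $k$ and let $q_1<\cdots<q_k$ be the first $k$ primes not dividing $a$, taken large enough that the tail $\sum_{q>q_k}\frac{1}{q(q-1)}$ of the convergent series $\sum_q\frac{1}{q(q-1)}$ is as small as desired. Put $M=(q_1\cdots q_k)^2$. For each $i$, among the $\varphi(q_i^2)=q_i(q_i-1)\ge 2$ residues modulo $q_i^2$ coprime to $q_i$, at most one coincides with the forbidden class $c_{q_i}$, so a valid choice $r_i$ (coprime to $q_i$ with $r_i\not\equiv c_{q_i}\pmod{q_i^2}$) always exists. By the Chinese Remainder Theorem there is a residue $r\pmod M$ with $\gcd(r,M)=1$ and $r\equiv r_i\pmod{q_i^2}$ for all $i$; every prime $p\equiv r\pmod M$ then satisfies $q_i^2\nmid G(p)$ for $i=1,\dots,k$. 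By the prime number theorem for arithmetic progressions to the fixed modulus $M$, there are $\sim \pi(x)/\varphi(M)$ such primes $p\le x$.

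It then remains to remove those primes $p\le x$ with $p\equiv r\pmod M$ for which some prime $q>q_k$ has $q^2\mid G(p)$, and I would show these form a negligible fraction. Since such $q$ satisfies $q\le\sqrt{ax+b}$ and $q\nmid M$, the primes in question lie in a single class modulo $\lcm(M,q^2)=Mq^2$. For $q_k<q\le x^{1/4}$ the Brun--Titchmarsh inequality bounds their number by $\frac{2x}{\varphi(Mq^2)\log(x/(Mq^2))}\ll \frac{x}{\varphi(M)\log x}\cdot\frac{1}{q(q-1)}$, and summing over this range contributes at most a small multiple of $\pi(x)/\varphi(M)$ --- small precisely because the tail $\sum_{q>q_k}\frac{1}{q(q-1)}$ was made small by the choice of $q_k$. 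For $x^{1/4}<q\le\sqrt{ax+b}$ the trivial bound $x/q^2+1$ suffices and contributes only $O(x^{3/4})=o(\pi(x))$. Subtracting, the number of primes $p\le x$ with $G(p)$ squarefree is at least a positive constant multiple of $\pi(x)/\varphi(M)$ for all large $x$, whence there are infinitely many such $p$.

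The main obstacle is the tension between the main term, which counts primes and therefore carries a saving of $\log x$, and the sieve tail, whose terms count integers in residue classes. A naive union bound over all $q$ using the integer count $x/q^2+1$ overwhelms the main term, since it discards this $\log x$ factor. The two devices that resolve the difficulty are (i) handling the small primes exactly via CRT rather than by counting, so that no $\log x$ is lost there, and (ii) invoking Brun--Titchmarsh for the intermediate primes $q\le x^{1/4}$ to recover the $\log x$ saving, with the otherwise harmful constant $2$ rendered harmless by shrinking $\sum_{q>q_k}\frac{1}{q(q-1)}$ through the choice of $q_k$.
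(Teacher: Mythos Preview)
The paper does not prove this lemma at all; it merely quotes it with a citation to \cite{JonesActa} and uses it as a black box. So there is no ``paper's proof'' to compare against.

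Your argument is correct and is the standard Hooley-type sieve for squarefree values of a linear form at prime arguments. The logic is sound at every step: the observation that primes $q\mid a$ never contribute a square factor (by $\gcd(a,b)=1$), the CRT construction of a residue class modulo $M=(q_1\cdots q_k)^2$ avoiding the finitely many forbidden classes, the PNT in the fixed progression $r\bmod M$ for the main term, Brun--Titchmarsh for the range $q_k<q\le x^{1/4}$ giving a contribution bounded by $C\cdot\dfrac{\pi(x)}{\varphi(M)}\sum_{q>q_k}\dfrac{1}{q(q-1)}$, and the trivial bound for $q>x^{1/4}$ yielding $O(x^{3/4})$. Choosing $k$ so that the tail sum is small enough to be beaten by the main term finishes it. One cosmetic point: write $\sqrt{|a|x+|b|}$ rather than $\sqrt{ax+b}$, since $a$ may be negative; and in the Brun--Titchmarsh step you may silently pass over the case where the combined residue class modulo $Mq^2$ fails to be coprime to $q$, but that case contributes at most the single prime $p=q$ and is harmless.
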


The next theorem is a theorem due to Jakhar, Khanduja and Sangwan \cite{JKS2} that gives necessary and sufficient conditions for an arbitrary irreducible trinomial $f(x)=x^{n}+A^mx+B\in \Z[x]$ to be monogenic, where $m<n$. 
\begin{thm}\label{Thm:JKS2} 
Let $n\ge 2$ be an integer.
Let $K=\Q(\lambda)$ be an algebraic number field with $\lambda\in \Z_K$, the ring of integers of $K$, having minimal polynomial $f(x)=x^{n}+Ax^m+B$ over $\Q$, where $\gcd(m,n)=d_0$, $m=m_1d_0$ and $n=n_1d_0$. A prime factor $q$ of $\Delta(f)$ does not divide $\left[\Z_K:\Z[\lambda]\right]$ if and only if $q$ satisfies one of the following conditions:
 \begin{enumerate}[label=(\roman*), font=\normalfont]
  \item \label{JKS:C1} when $q\mid A$ and $q\mid B$, then $q^2\nmid B$;
  \item \label{JKS:C2} when $q\mid A$ and $q\nmid B$, then
  \[\mbox{either } \quad q\mid a_2 \mbox{ and } q\nmid b_1 \quad \mbox{ or } \quad q\nmid a_2\left((-B)^{m_1}a_2^{n_1}+\left(-b_1\right)^{n_1}\right),\]
  where $a_2=A/q$ and $b_1=\frac{B+(-B)^{q^j}}{q}$, such that $q^j\mid\mid n$ with $j\ge 1$;
  \item \label{JKS:C3} when $q\nmid A$ and $q\mid B$, then
  \[\qquad \quad \mbox{either}\quad q\mid a_1 \mbox{ and } q\nmid b_2  \quad\mbox{or}\quad  q\nmid a_1b_2^{m-1}\left((-A)^{m_1}a_1^{n_1-m_1}-\left(-b_2\right)^{n_1-m_1}\right),\]
  where $a_1=\frac{A+(-A)^{q^l}}{q}$, such that $q^l\mid\mid (n-m)$ with $l\ge 0$, and $b_2=B/q$;
  \item \label{JKS:C4} when $q\nmid AB$ and $q\mid m$ with $n=s^{\prime}q^k$, $m=sq^k$, $q\nmid \gcd\left(s^{\prime},s\right)$, then the polynomials
   \begin{equation*}
     x^{s^{\prime}}+Ax^s+B \quad \mbox{and}\quad \dfrac{Ax^{sq^k}+B+\left(-Ax^s-B\right)^{q^k}}{q}
   \end{equation*}
   are coprime modulo $q$;
         \item \label{JKS:C5} when $q\nmid ABm$, then
     \[q^2\nmid \left(B^{n_1-m_1}n_1^{n_1}-(-1)^{m_1}A^{n_1}m_1^{m_1}(m_1-n_1)^{n_1-m_1}\right).\]
   \end{enumerate}
\end{thm}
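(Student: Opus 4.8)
The plan is to reduce the biconditional to a local question at $q$ and then to read off each of the five conditions from the factorization of $\bar{f}:=f\bmod q$ over $\F_q$. By the index relation \eqref{Eq:Dis-Dis}, the prime $q$ divides $[\Z_K:\Z[\lambda]]$ exactly when $\Z[\lambda]$ fails to be maximal at $q$, and this failure is detected from $\bar f$ by Dedekind's criterion and, more flexibly, by the theory of $\phi$-Newton polygons together with Ore's theorem. The alternatives \ref{JKS:C1}--\ref{JKS:C5} are precisely the mutually exclusive, exhaustive cases obtained by splitting on whether $q$ divides $A$, $B$, and $m$; each forces a characteristically different shape on $\bar f$, so I would dispose of them one at a time and then observe that together they cover every prime $q\mid\Delta(f)$.

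It is convenient to phrase Dedekind's criterion as follows: writing $\bar f=\prod_i \bar g_i^{\,e_i}$ with the $\bar g_i$ distinct monic irreducibles over $\F_q$ and $g_i$ monic lifts, put $T=(f-\prod_i g_i^{\,e_i})/q\in\Z[x]$ and $\bar h_1=\prod_{e_i\ge 2}\bar g_i$; then $q\nmid[\Z_K:\Z[\lambda]]$ if and only if $\gcd(\bar h_1,\bar T)=1$ in $\F_q[x]$. The two tamest cases fall out immediately. In \ref{JKS:C1}, where $q\mid A$ and $q\mid B$, one has $\bar f=x^n$, so $\bar h_1=x$ and $T=(A/q)x^m+(B/q)$; here $x\nmid\bar T$ is equivalent to $q\nmid(B/q)$, that is, to $q^2\nmid B$. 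In \ref{JKS:C5}, where $q\nmid ABm$, the prime $q$ is tame so $f$ is automatically $q$-regular; the single repeated-factor contribution is governed by the displayed factor $B^{n_1-m_1}n_1^{n_1}-(-1)^{m_1}A^{n_1}m_1^{m_1}(m_1-n_1)^{n_1-m_1}$ of $\Delta(f)$, and $q\nmid[\Z_K:\Z[\lambda]]$ holds if and only if this quantity is not divisible by $q^2$.

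For \ref{JKS:C2} and \ref{JKS:C3} the repeated factors of $\bar f$ are created not by a vanishing coefficient alone but by a $q$-power Frobenius collapse, which is exactly what the exponents $q^j\mid\mid n$ and $q^l\mid\mid (n-m)$ record. When $q\nmid A$ and $q\mid B$ one has $\bar f=x^m\bigl(x^{n-m}+\bar A\bigr)$; the factor $x$ is handled by the $x$-Newton polygon, whose regularity is governed by $b_2=B/q$, while $x^{n-m}+\bar A=\bigl(x^{(n-m)/q^l}+\bar A\bigr)^{q^l}$ contributes repeated irreducible factors whose residual polynomials are separable precisely when $a_1=(A+(-A)^{q^l})/q$ satisfies the stated non-vanishing, the twist $(-A)^{q^l}$ arising from the $q^l$-th power Frobenius map. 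Assembling the $x$-contribution and the $(x^{n-m}+\bar A)$-contribution yields the dichotomy in \ref{JKS:C3}, and the dual analysis of $\bar f=x^n+\bar B$ (with $q\mid A$, $q\nmid B$, twist coefficient $b_1$, and the datum $a_2=A/q$) yields \ref{JKS:C2}.

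The main obstacle is \ref{JKS:C4}, the case $q\nmid AB$ but $q\mid m$. Here no coefficient vanishes modulo $q$, so the repeated factors of $\bar f$ arise solely from the wild degeneration of $f'=x^{m-1}(nx^{n-m}+Am)$ once $q\mid m$, and after writing $n=s'q^k$, $m=sq^k$ with $q\nmid\gcd(s',s)$ the relevant Frobenius twist enters through the $q^k$-th power $(-Ax^s-B)^{q^k}$. The delicate step is to prove that the two explicit polynomials $x^{s'}+Ax^s+B$ and $\bigl(Ax^{sq^k}+B+(-Ax^s-B)^{q^k}\bigr)/q$ are exactly the data governing $q$-regularity---the first packaging all the repeated residues, the second being the higher-order correction produced by expanding $f$ one level further in the uniformizer---and that their coprimality modulo $q$ is equivalent to separability of every residual polynomial attached to every repeated factor. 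Carrying out this identification, by tracking the $q^k$-th power expansion carefully, is where the real work lies; once it is done, combining the five cases gives the full biconditional.
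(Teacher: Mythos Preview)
The paper does not prove this theorem at all: it is stated as Theorem~\ref{Thm:JKS2} with the attribution ``due to Jakhar, Khanduja and Sangwan \cite{JKS2}'' and is simply quoted from the literature as a black box to be applied later in the proof of Theorem~\ref{Thm:Main}. There is therefore no ``paper's own proof'' to compare your attempt against.

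For what it is worth, your outline follows the same architecture as the original Jakhar--Khanduja--Sangwan argument in \cite{JKS2}: reduce to a local question via Dedekind's criterion, factor $\bar f$ over $\F_q$ according to the divisibility of $A$, $B$, and $m$ by $q$, and in the wild cases invoke $\phi$-Newton polygons and Ore's theorem to control the residual polynomials. That said, your sketch is genuinely only a sketch. In case~\ref{JKS:C5} the claim that ``$q$ is tame so $f$ is automatically $q$-regular'' is not a proof; one still has to identify the repeated factor of $\bar f$ and compute the corresponding $T$ (or residual polynomial) to see that the displayed quantity is exactly what governs $q^2$-divisibility. In cases~\ref{JKS:C2}--\ref{JKS:C4} you correctly locate where the work lies (the Frobenius collapse and the identification of the two auxiliary polynomials in \ref{JKS:C4}) but do not carry it out; the passage from ``the residual polynomials are separable precisely when $a_1$ satisfies the stated non-vanishing'' to the exact displayed condition, with its particular combination of $m_1$, $n_1$, signs, and exponents, requires a concrete computation that is absent here. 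So as a roadmap your proposal is sound and matches the source, but it is not yet a proof.
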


\begin{defn}\cite{HJ}
  The {\em directed graph} of an $n\times n$ matrix $M$, which we denote $\Gamma(M)$, is the directed graph on $n$ vertices $v_1, v_2, \ldots ,v_n$ such that there is a directed path in $\Gamma(M)$ from $v_i$ to $v_j$ if and only if $M(i,j)\ne 0$, where $M(i,j)$ is the entry in the $i$th-row, $j$th-column of $M$. We say that $\Gamma(M)$ is {\em strongly connected} if between every pair of vertices $v_i$ and $v_j$ in $\Gamma(M)$, there is a directed path that begins at $v_i$ and ends at $v_j$. 
\end{defn}

The next two results give information concerning the irreducibility of a matrix. 
\begin{thm}\label{Thm:Irreducible} {\rm \cite{HJ}}
  Let $M$ be an $n\times n$ matrix. Then $M$ is irreducible if and only if $\Gamma(M)$ is strongly connected.  
\end{thm}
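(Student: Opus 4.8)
The plan is to prove the equivalence between matrix irreducibility and the strong connectivity of the directed graph $\Gamma(M)$ by establishing each direction as its contrapositive, using the standard interpretation of reducibility in terms of a simultaneous row-and-column permutation that exposes an invariant coordinate subspace. Recall that $M$ is reducible precisely when there exists a permutation matrix $P$ such that $P^{T}MP$ has block upper-triangular form
\[
P^{T}MP=\begin{pmatrix} B & C \\ 0 & D \end{pmatrix},
\]
where $B$ and $D$ are square blocks of positive size and the lower-left block is zero. Since conjugating $M$ by a permutation matrix corresponds exactly to relabeling the vertices of $\Gamma(M)$, and the zero-pattern of $M$ is what determines the edges of $\Gamma(M)$, this matrix condition translates directly into a statement about the vertex set, which I would exploit in both directions.

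First I would prove that if $\Gamma(M)$ is \emph{not} strongly connected, then $M$ is reducible. If strong connectivity fails, there exist vertices $v_i$ and $v_j$ such that no directed path runs from $v_i$ to $v_j$. I would let $S$ denote the set of all vertices reachable from $v_i$ by a directed path (including $v_i$ itself), so that $S$ is nonempty and, because $v_j\notin S$, its complement $\overline{S}$ is also nonempty. The key observation is that $S$ is ``closed under out-edges'': if $v_k\in S$ and there is an edge from $v_k$ to $v_\ell$, then $v_\ell$ is reachable from $v_i$ and hence $v_\ell\in S$. Consequently there are no edges from $S$ to $\overline{S}$, which means $M(k,\ell)=0$ whenever $v_k\in S$ and $v_\ell\in\overline{S}$. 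Choosing a permutation that lists the vertices of $\overline{S}$ first and those of $S$ second produces the block upper-triangular form above with a genuine zero block, so $M$ is reducible.

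Conversely, I would prove that if $M$ is reducible then $\Gamma(M)$ is not strongly connected, again by passing to the partitioned form. Reducibility furnishes a partition of the vertex set into two nonempty sets $S$ and $\overline{S}$ with $M(k,\ell)=0$ for every $v_k\in S$ and $v_\ell\in \overline{S}$, i.e.\ no edges lead from $S$ into $\overline{S}$. Then any directed path that starts at a vertex of $S$ can never leave $S$: an inductive argument on the length of a path shows that every vertex on the path remains in $S$, since leaving $S$ would require an edge from $S$ to $\overline{S}$. Therefore, picking $v_i\in S$ and $v_j\in\overline{S}$, there is no directed path from $v_i$ to $v_j$, and strong connectivity fails.

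The conceptual core of both directions is the same combinatorial fact---that the nonexistence of a path separates the vertices into a ``forward-closed'' set and its complement with no crossing edges---so the main work is simply to phrase this cleanly and to match the partition of vertices with the block structure of the permuted matrix. The only point requiring care, and the step I expect to be the mildest obstacle, is the bookkeeping that translates between the combinatorial partition $(S,\overline{S})$ and the explicit permutation matrix $P$ realizing the block form, together with the inductive verification that forward-closure of $S$ is exactly equivalent to the vanishing of the off-diagonal block. Since this result is quoted from \cite{HJ}, I would keep the argument self-contained but brief, emphasizing the path-reachability/closure correspondence rather than belaboring the permutation-matrix algebra.
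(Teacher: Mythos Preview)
The paper does not actually prove this theorem; it is stated with a citation to Horn and Johnson \cite{HJ} and used as a black box. Your proposed argument is the standard and correct proof: reducibility is exactly the existence of a nontrivial vertex partition $(S,\overline{S})$ with no edges from $S$ to $\overline{S}$, and such a partition exists if and only if strong connectivity fails, via the forward-closure of the reachability set. There is nothing to compare against in the paper, and your sketch has no gaps.
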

Since a reducible matrix is similar to a matrix in block upper-triangular form, we have the next lemma.
\begin{lemma}
  Let $A$ and $B$ be two similar $n\times n$ matrices. Then $A$ is irreducible if and only if $B$ is irreducible. 
\end{lemma}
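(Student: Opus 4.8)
The statement should be read with similarity meaning permutation similarity, i.e. $B = P^{\top} A P$ for some permutation matrix $P$; this is the notion under which the definition of reducibility was phrased (transformation to block upper-triangular form through a permutation of rows and columns), and it is the notion referenced by the sentence immediately preceding the lemma. With this reading, the plan is to pass from the matrices to their directed graphs and invoke Theorem \ref{Thm:Irreducible}, thereby reducing the claim to the statement that permutation similarity preserves strong connectivity of the associated directed graph.

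First I would write $P$ as the permutation matrix of a permutation $\sigma$ of $\{1,2,\ldots,n\}$ and record the effect of conjugation on entries: $B(i,j) = A(\tau(i),\tau(j))$ for a permutation $\tau$ determined by $\sigma$ (with whatever indexing convention makes this exact for the chosen form of $P$). In particular $B(i,j)\ne 0$ if and only if $A(\tau(i),\tau(j))\ne 0$. The only genuine bookkeeping here is confirming the convention, namely that conjugation by a permutation matrix simultaneously permutes the rows and the columns by the same permutation.

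This identity says precisely that the vertex relabeling $v_i \mapsto v_{\tau(i)}$ is an isomorphism of directed graphs from $\Gamma(B)$ to $\Gamma(A)$: there is a directed edge from $v_i$ to $v_j$ in $\Gamma(B)$ exactly when there is one from $v_{\tau(i)}$ to $v_{\tau(j)}$ in $\Gamma(A)$. A graph isomorphism carries directed paths to directed paths, so it preserves the property that some directed path joins every ordered pair of vertices; hence $\Gamma(A)$ is strongly connected if and only if $\Gamma(B)$ is. By Theorem \ref{Thm:Irreducible}, $A$ is irreducible if and only if $\Gamma(A)$ is strongly connected, and likewise for $B$, so $A$ is irreducible if and only if $B$ is.

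Alternatively, and even more directly, one can argue from the definition without the graph: if $B = P^{\top} A P$ is reducible, say $Q^{\top} B Q$ is block upper-triangular for a permutation matrix $Q$, then $(PQ)^{\top} A (PQ) = Q^{\top} B Q$ is block upper-triangular and $PQ$ is again a permutation matrix, so $A$ is reducible; the converse follows by symmetry. I expect no real obstacle here, since the content of the lemma is essentially that the composition of two permutations is a permutation; the only care needed is in fixing conventions, i.e. which notion of similarity is intended and how $P$ acts on indices.
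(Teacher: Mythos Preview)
Your proposal is correct. The paper gives no formal proof; it simply prefaces the lemma with the sentence ``Since a reducible matrix is similar to a matrix in block upper-triangular form, we have the next lemma,'' which is exactly the transitivity argument you spell out in your alternative at the end (compose the two permutation matrices). Your first route, via the graph isomorphism $\Gamma(B)\cong\Gamma(A)$ and Theorem~\ref{Thm:Irreducible}, is a slightly different and more explicit path that the paper does not take; it buys a clean combinatorial picture at the cost of a little more writing. Your insistence on reading ``similar'' as permutation-similar is also well placed and worth keeping: under general similarity the statement would be false (the all-ones $2\times2$ matrix is irreducible but similar to $\mathrm{diag}(2,0)$), and the paper's framing sentence confirms that permutation similarity is what is intended.
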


The following lemma gives sufficient conditions for the characteristic polynomial of a matrix to be a Perron polynomial.
\begin{lemma}\label{Lem:PFP} 
 Let $M$ be a square irreducible matrix whose entries are nonnegative integers, so that $M$ has a unique dominant positive real eigenvalue $\lambda$ by the Perron-Frobenius theorem. 
 If the characteristic polynomial of $M$, which we denote $p_M(x)$, 
 is irreducible over $\Q$, then $p_M(x)$ 
 is a Perron polynomial, and $\lambda$ is a Perron number.   
\end{lemma}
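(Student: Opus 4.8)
The plan is to verify directly that $p_M(x)$ meets the definition of a Perron polynomial. First I would observe that, being the characteristic polynomial of an integer matrix, $p_M(x)$ is monic in $\Z[x]$, so the eigenvalue $\lambda$ is an algebraic integer; and since $p_M(x)$ is assumed irreducible over $\Q$, it is the minimal polynomial of $\lambda$. Consequently the full set of Galois conjugates of $\lambda$ coincides with the set of roots of $p_M(x)$, i.e.\ with the spectrum of $M$ (each eigenvalue occurring once, since an irreducible polynomial is separable). Thus the lemma reduces to two assertions: that $\lambda>1$, and that $\abs{\alpha}<\lambda$ for every eigenvalue $\alpha\neq\lambda$ of $M$.

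For the strict-dominance assertion I would invoke the Perron–Frobenius theorem in the strong form applicable under the stated hypothesis that $M$ has a \emph{unique} dominant positive real eigenvalue (equivalently, that $M$ is primitive): this gives that $\lambda$ equals the spectral radius $\rho(M)$, is an algebraically simple eigenvalue, and strictly exceeds in modulus every other eigenvalue of $M$, which is exactly what is needed. I expect this to be the one genuinely delicate point, and it is where the word ``unique'' earns its keep: for a matrix that is merely irreducible but not primitive the peripheral spectrum may contain several eigenvalues of modulus $\rho(M)$ — for instance $M=\left(\begin{smallmatrix}0&1\\2&0\end{smallmatrix}\right)$ is irreducible with characteristic polynomial $x^{2}-2$, irreducible over $\Q$, yet $\sqrt{2}$ and $-\sqrt{2}$ both lie on the spectral circle, so $\sqrt{2}$ is not a Perron number. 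So the argument must rest on uniqueness of the dominant eigenvalue, not on irreducibility of $M$ alone.

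For $\lambda>1$: when $M$ has size $n\ge 2$, primitivity furnishes an exponent $k$ with every entry of $M^{k}$ a positive integer, hence $\ge 1$; since the spectral radius of a nonnegative matrix is at least its smallest row sum, $\lambda^{k}=\rho(M^{k})\ge n\ge 2$, so $\lambda>1$. (Alternatively, were $\lambda\le 1$ then all conjugates of $\lambda$ would lie in the closed unit disc, so by Kronecker's theorem $p_M(x)$ would be cyclotomic and $\lambda$ a positive real root of unity, i.e.\ $\lambda=1$, ruled out for $n\ge2$ by the row-sum bound; the trivial $1\times1$ case is settled by inspection.) In the setting of Theorem~\ref{Thm:Main} this step is immediate anyway, since there $M$ will be similar to the companion matrix of $f_{n,a,p}(x)=x^{n}-ax^{n-1}-p$ and $f_{n,a,p}(1)=1-a-p<0$ forces a real root larger than $1$, which Perron–Frobenius identifies with $\lambda$. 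Assembling the three points, $\lambda$ is a Perron number and $p_M(x)$, being its minimal polynomial, is a Perron polynomial.
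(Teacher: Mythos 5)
Your proof is correct, and in fact more careful than the paper's, which disposes of the lemma in two sentences: the paper takes the strict dominance $\abs{\alpha}<\lambda$ as already supplied by the hypothesis (``$M$ has a unique dominant positive real eigenvalue by the Perron--Frobenius theorem''), proves $\lambda>1$ by observing that if $\lambda\le 1$ then the product of all eigenvalues of $M$ would have modulus less than $1$, contradicting the fact that $p_M(0)$ is a (nonzero, by irreducibility) integer, and then concludes at once that $p_M(x)$ is the minimal polynomial of $\lambda$. Your row-sum/primitivity argument for $\lambda>1$ is a valid alternative but heavier than necessary; the constant-term argument is shorter and does not need primitivity at that step (though it does quietly use strict dominance to make the product \emph{strictly} less than $1$). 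The genuinely valuable divergence is the point you single out as delicate: you are right that for a matrix that is irreducible but not primitive, Perron--Frobenius yields only $\abs{\alpha}\le\rho(M)$ on the rest of the spectrum, and your example $M=\left(\begin{smallmatrix}0&1\\2&0\end{smallmatrix}\right)$ is a real counterexample to the lemma if plain irreducibility is all that is assumed: $x^2-2$ is irreducible over $\Q$, yet $\sqrt{2}$ is not a Perron number since $-\sqrt{2}$ lies on the same circle. The paper's proof never confronts this; it treats strict dominance as part of what Perron--Frobenius delivers for irreducible matrices. The main theorem is unaffected because the companion matrix of $f_{n,a,p}(x)$ has the nonzero diagonal entry $a\ge 1$, so its digraph contains a loop, the cycle lengths have greatest common divisor $1$, and the matrix is primitive. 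In short, your version buys a hypothesis-level correction that the paper's statement elides, at the cost of a slightly longer argument for $\lambda>1$.
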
 
\begin{proof}
    If $\lambda\le 1$, then the product of all eigenvalues of $M$ has modulus less than 1, contradicting the fact that $p_M(0)\in \Z$. Hence, $\lambda>1$. Since $p_M(x)$ is irreducible, the minimal polynomial of $\lambda$ is  $p_M(x)$, and the proof is complete. 
\end{proof}

\section{The Proof of Theorem \ref{Thm:Main}}\label{Section:Main}
We first prove a lemma. 
\begin{lemma}\label{Lem:New irreducible}
  Let $a,n,p\in \Z$ with $a\ge 1$, $n\ge 2$ and $p$ a prime. Then $f_{n,a,p}(x)$ is irreducible over $\Q$ except when $2\mid n$ and $p=a+1$. 
\end{lemma}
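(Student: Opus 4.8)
The plan is to prove the claim by comparing the size of the prime $p$ with the coefficient $a$, which splits the statement into a short list of cases, all but one of them routine. Throughout, write $f=f_{n,a,p}(x)=x^n-ax^{n-1}-p$, whose coefficients are $a_{n-1}=-a$, $a_{n-2}=\cdots=a_1=0$, and $a_0=-p$. The cases are determined by the integer $p-a$, and they exhaust all possibilities.

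First, the easy ranges. If $p\ge a+2$, then $|a_0|=p$ is prime and $|a_0|=p>1+a=1+|a_{n-1}|+|a_{n-2}|+\cdots+|a_1|$, so Lemma~\ref{Lem:irreducible}\ref{JonesActa} gives irreducibility. If $a\ge p+2$, then $|a_{n-1}|=a>1+p=1+|a_{n-2}|+\cdots+|a_0|$, and if $a=p+1$ then $|a_{n-1}|=a=1+p=1+|a_{n-2}|+\cdots+|a_0|$ together with $f(1)=1-a-p=-2p\ne 0$ and $f(-1)=(1+a)(-1)^n-p\in\{2,\,-2p-2\}$, hence $f(\pm1)\ne 0$; in both of these situations Lemma~\ref{Lem:irreducible}\ref{Perron} applies. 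If $p=a$ (so $a=p$ is a prime $\ge 2$), then every non-leading coefficient of $f$ is divisible by $p$ while $p^2\nmid -p$, so $f$ is Eisenstein at $p$ and irreducible. This leaves exactly the case $p=a+1$.

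For $p=a+1$ with $n$ even, one computes $f(-1)=1+a-p=0$, so $x+1\mid f(x)$; since $\deg f=n\ge 2$, this is the asserted exception. The decisive remaining subcase is $p=a+1$ with $n$ odd. Here I would first rule out rational roots: the candidates are $\pm1$ and $\pm p$, and direct evaluation gives $f(1)=-2a<0$, $f(-1)=-2a-2<0$, $f(p)=p^{n-1}(p-a)-p=p(p^{n-2}-1)>0$ (using $n\ge 3$), and $f(-p)<0$. Then I would use the $p$-adic Newton polygon of $f$: since $a=p-1$ is coprime to $p$, the coefficient valuations are $v_p=0$ in degrees $n$ and $n-1$ and $v_p=1$ in degree $0$, so the lower hull is the segment from $(0,1)$ to $(n-1,0)$ of slope $-1/(n-1)$ followed by the horizontal segment from $(n-1,0)$ to $(n,0)$. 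Because $\gcd(1,n-1)=1$, the Eisenstein--Dumas theorem shows that the first segment contributes a factor of $f$ that is irreducible over $\Q_p$ of degree $n-1$. Consequently, in any factorization of $f$ over $\Q$ into monic integer polynomials, this degree-$(n-1)$ factor must divide one of the two parts, forcing the other part to be linear; that is impossible by the rational-root analysis. Hence $f$ is irreducible over $\Q$, completing the proof.

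I expect the subcase $p=a+1$, $n$ odd, to be the only real obstacle. The elementary criteria of Lemma~\ref{Lem:irreducible} fall exactly on the boundary and just miss it, and the naive reduction $f(x)\equiv x^{n-1}(x+1)\pmod p$ is compatible with a factorization into two factors of intermediate degree; it is the extra rigidity supplied by the Newton polygon — equivalently, the fact that the positive-valuation roots of $f$ over $\overline{\Q}_p$ all have valuation exactly $1/(n-1)$ — that rules this out and pins the only possible factorization down to a linear factor times a degree-$(n-1)$ factor, which is then excluded by checking $f$ at $\pm1,\pm p$.
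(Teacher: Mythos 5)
Your proof is correct, and in the routine ranges it coincides with the paper's: $p\ge a+2$ via part~\ref{JonesActa} of Lemma~\ref{Lem:irreducible}, $a\ge p+1$ via the two halves of part~\ref{Perron}, $p=a$ via Eisenstein, and the even-degree exception via $f(-1)=0$. (Your split by the sign of $p-a$ is in fact slightly cleaner than the paper's, whose clause ``if $p>a-1$'' should literally read $p>a+1$ for part~\ref{JonesActa} to apply.) The genuine divergence is in the one hard subcase $p=a+1$ with $n$ odd. The paper argues archimedeanly: writing $\alpha^{n-1}(\alpha-(p-1))=p$, it shows every zero of $f_{n,p-1,p}$ has modulus strictly greater than $1$ (the boundary case $\abs{\alpha}=1$ collapses to $\alpha=-1$ via the identity $p^2-\abs{\alpha-(p-1)}^2=2(p-1)(\mathrm{Re}\,\alpha+1)$, and $\abs{\alpha}<1$ yields $p<p$), so that in any nontrivial factorization over $\Z$ one factor would have constant term $\pm1$, which is impossible since the product of the moduli of its roots exceeds $1$. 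You instead argue $p$-adically: the Newton polygon at $p$ has a segment of slope $-1/(n-1)$ and horizontal length $n-1$ with coprime rise and run, giving an irreducible degree-$(n-1)$ factor over $\Q_p$, so any rational factorization must contain a linear factor, which you exclude by evaluating at $\pm1,\pm p$. Both arguments are complete; the paper's is more elementary (no $p$-adic machinery), while yours is computationally lighter and, as a bonus, pins down the exact shape $(x+1)\cdot(\text{degree }n-1)$ of the factorization in the excluded even case.
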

\begin{proof}
  If $2\mid n$ then $f_{n,p-1,p}(-1)=0$. We show that $f_{n,a,p}(x)$ is irreducible over $\Q$ in every other situation. Observe that $f_{n,p,p}(x)$ is Eisenstein with respect to $p$, and is therefore irreducible over $\Q$. If $p<a-1$, then $f_{n,a,p}(x)$ is irreducible over $\Q$ by the first part of \ref{Perron} in Lemma \ref{Lem:irreducible}. If $p=a-1$,  then $f_{n,a,p}(\pm 1)\ne 0$, and so $f_{n,a,p}(x)$ is irreducible over $\Q$ by the second part of \ref{Perron} in Lemma \ref{Lem:irreducible}. If $p>a-1$, then $f_{n,a,p}(x)$ is irreducible over $\Q$ by \ref{JonesActa} of Lemma \ref{Lem:irreducible}. 
  
  Therefore, we only need to address the case $p=a+1$ when $2\nmid n$. We claim in this case that all zeros $\alpha$ of $f_{n,p-1,p}(x)$ are such that $\abs{\alpha}>1$. Let 
  \begin{equation} \label{Eq:f(alpha)=0}
  f_{n,p-1,p}(\alpha)=\alpha^{n-1}(\alpha-(p-1))-p=0,
  \end{equation} and assume, by way of contradiction, that $\abs{\alpha}\le 1$. Suppose first that $\abs{\alpha}=1$, and let $\alpha=c+di\in \C$, so that $c^2+d^2=1$. Then, from \eqref{Eq:f(alpha)=0}, it follows that 
  \[p^2-\abs{c-(p-1)+di}^2=2(p-1)(c+1)=0.\] Hence, $c=-1$, which implies that $d=0$ and $\alpha=-1$, contradicting the fact that 
  \[f_{n,p-1,p}(-1)=(-1)^n-(p-1)(-1)^{n-1}-p=-2(p+1)\ne 0.\] Suppose then that $\abs{\alpha}<1$. Then, it follows from \eqref{Eq:f(alpha)=0} that 
  \[p=\abs{\alpha}^{n-1}\abs{\alpha-(p-1)}<\abs{\alpha-(p-1)}\le \abs{\alpha}+p-1<p,\] which is impossible. Thus, the claim that all zeros of $f_{n,p-1,p}(x)$ have modulus greater than 1 is established. 
  
  Assume now, by way of contradiction, that 
  $f_{n,p-1,p}(x)=g(x)h(x)$ is a nontrivial factorization of $f_{n,p-1,p}(x)$, where $g(x),h(x)\in \Z[x]$. Then, without loss of generality, we have $\abs{g(0)}=1$ and $\abs{h(0)}=p$, which is impossible since all zeros of $g(x)$ have modulus strictly larger than 1, and this final contradiction completes the proof of the lemma.      
\end{proof}
\begin{proof}[Proof of Theorem \ref{Thm:Main}]
    First, note that $f_{n,a,p}(x)$ is irreducible over $\Q$ by Lemma \ref{Lem:New irreducible}, except when $2\mid n$ and $p=a+1$. Second, since $\gcd(n,a(n-1))=1$, we have from Lemma \ref{Lem:Linear} with \[G(t):=n^nt+a^n(n-1)^{n-1},\] that there exist infinitely many primes $p$ such that $G(p)$ is squarefree. We will therefore assume that $p$ is such a prime, with $p\ne a+1$ if $2\mid n$, so that $f_{n,a,p}(x)$ is irreducible over $\Q$. 
  
   The remainder of the proof of the theorem is divided into the following two parts:
  \begin{enumerate}
    \item \label{I1} $f_{n,a,p}(x)$ is monogenic if and only if $G(p)$ is squarefree;
    \item \label{I2} if $p>a+1$ and $G(p)$ is squarefree, then $f_{n,a,p}(x)$ is a monogenic strictly-Perron polynomial.
  \end{enumerate} 
   To prove item \ref{I1}, we conduct a careful analysis of the conditions of Theorem \ref{Thm:JKS2} with $A=-a$, $B=-p$, $m=n-1$ and $q$ a prime divisor of $\Delta(f_{n,a,p})$. Recall from \eqref{Eq:Delta(f_p)} that 
   \begin{equation*}\label{Eq:Del}
   \Delta(f_{n,a,p})=(-1)^{(n-1)(n+2)/2}p^{n-2}G(p).
   \end{equation*} If $q\mid a$ and $q\mid p$, then $q=p$ and condition \ref{JKS:C1} 
   is clearly satisfied since $p$ is squarefree. If $q\mid a$ and $q\nmid b$, then $q\ne p$ so that $q\mid G(p)$, which is impossible, since $q\nmid n$. Hence, condition \ref{JKS:C2} is not applicable. Condition \ref{JKS:C3}  deals specifically with $q=p$, and since $a_2=0$ and $b_2=-1$, we see that the first statement under condition \ref{JKS:C3} is always satisfied. 
   Condition \ref{JKS:C4} deals with primes $q$ such that $q\nmid ap$ and $q\mid (n-1)$. But since $q\mid G(p)$, then $q\mid n^np$, which is impossible. Hence, condition \ref{JKS:C4} is not applicable. Finally, we are left with condition \ref{JKS:C5}, which handles primes $q$ dividing $G(p)$ such that $q\nmid ap(n-1)$. For such primes, we deduce from condition \ref{JKS:C5} that $q\mid \mid G(p)$ if and only if $f_p(x)$ is monogenic. We still need to examine the possible divisibility of $G(p)$ by the primes $q$ that do divide $ap(n-1)$. So, suppose that $q\mid G(p)$ and $q\mid ap(n-1)$. If $q\mid a$, then $q\nmid n$ since $\gcd(a,n)=1$, and therefore, $q=p$. Similar reasoning shows that if $q\mid(n-1)$, then $q=p$. That is, if $q\mid G(p)$ and $q\mid ap(n-1)$, then $q\mid a(n-1)$ and $q=p$. If $p\mid a(n-1)$, then $p^2\mid a^n(n-1)^n$ since $n\ge 2$. Consequently, $p\mid \mid G(p)$ in this case, and we have established item \ref{I1}.

   We turn now to item \ref{I2}. Since $G(p)$ is squarefree, we know from item \ref{I1} that $f_{n,a,p}(x)$ is monogenic. Observe that the companion matrix of $f_{n,a,p}(x)$ is 
   \[\CC=\begin{bmatrix}
    0 & 0 & \dots  & 0 & p \\
    1 & 0 & \dots & 0 & 0 \\
    0 & 1 & \dots &  0 & 0 \\
    \vdots & \vdots & \ddots & \vdots & \vdots\\
    0 & 0 & \dots & 1 & a
    \end{bmatrix}\] 
    and $\Gamma(\CC)$ is

 \[\begin{tikzpicture}[
       decoration = {markings,
                     mark=at position .5 with {\arrow{Stealth[length=2mm]}}},
       dot/.style = {circle, fill, inner sep=1.4pt, node contents={},
                     label=#1},
every edge/.style = {draw, postaction=decorate}
                        ]
\node (a) at (2,5) [dot=$v_1$];
\node (b) at (0,4) [dot=left:$v_n$];
\node (c) at (.5,2) [dot=left:$v_{n-1}$];
\node (d) at (3.5,2) [dot=right:$v_3$.];
\node (e) at (4,4) [dot=right:$v_2$];
\path   (a) edge (b)    (b) edge (c)    (d) edge (e)  (e) edge (a)
        (c) edge[dashed] (d)    (b) edge[loop, min distance=15mm] (b);
         \end{tikzpicture}\]
It is easy to see that $\Gamma(\CC)$ is strongly connected, and therefore, $\CC$ is irreducible by Theorem \ref{Thm:Irreducible}. Hence, since $f_{n,a,p}(x)$ is the characteristic polynomial of $\CC$, it follows from Lemma \ref{Lem:PFP} that $f_{n,a,p}(x)$ is a Perron polynomial. By Descartes' rule of signs, $f_{n,a,p}(x)$ has exactly one positive real zero $\lambda$, which must be the Perron number associated with $f_{n,a,p}(x)$. (Note that $f_{n,a,p}(1)<0$, which confirms that $\lambda>1$.) Hence, since $\lambda^{-1}$ is a positive real number, $\lambda^{-1}$ cannot be a zero of $f_{n,a,p}(x)$, and therefore, $\lambda$ is not a Salem number.  

By Descartes' rule of signs, we have that  
\[f_{n,a,p}(-x)=\left\{\begin{array}{cll}
  x^n+ax^{n-1}-p & \mbox{if $2\mid n$}& \Rightarrow \mbox{$f_{n,a,p}(x)$ has exactly one}\\
   & & \qquad \qquad \qquad \mbox{negative real zero;}\\[.5em]
  -x^n-ax^{n-1}-p & \mbox{if $2\nmid n$}& \Rightarrow \mbox{$f_{n,a,p}(x)$ has no}\\
   & & \qquad \qquad \qquad \mbox{negative real zeros.}\\
\end{array}\right.\] 
In the case that $2\mid n$, let $r$ be the unique negative real zero of $f_{n,a,p}(x)$. Observe that  $f_{n,a,p}(-1)=1+a-p<0$ since $p>a+1$, and thus, $\abs{r}>1$. Consequently, regardless of the parity of $n$, if $\alpha$ is a zero of $f_{n,a,p}(x)$ with $\abs{\alpha}<1$, then $n\ge 3$ and $\alpha\not \in \R$, so that $f_{n,a,p}(x)$ has at least two distinct zeros with modulus less than 1. Hence, $\lambda$ cannot be an anti-Pisot number. 

To show that $\lambda$ is not a Pisot number, we show that $f_{n,a,p}(x)$ has a zero $\alpha\ne \lambda$ with $\abs{\alpha}>1$. If $2\mid n$, then we have already shown that $f_{n,a,p}(x)$ has a real zero $r\ne \lambda$ with $\abs{r}>1$. Hence, suppose that $2\nmid n$. In this case, we know that $f_{n,a,p}(x)$ has exactly one real zero $\lambda$. Let $z_1,z_2,\ldots ,z_{n-1}$ be the remaining zeros of $f_{n,a,p}(x)$ where $z_i\ne \lambda$ and $z_i\not \in \R$. Suppose, by way of contradiction, that $\abs{z_i}<1$ for all $i$. 
Then,  
\begin{equation}\label{Eq:contra}
\lambda>\lambda \prod_{i=1}^{n-1}\abs{z_i}=\abs{f_{n,a,p}(0)}=p.
\end{equation} Hence, since $p>a+1$ and $n\ge 2$, we have that   
  \[f_{n,a,p}(x)=x^{n-1}(x-a)-p\ge p^{n-1}(p-a)-p>0\] for all $x\ge p$. Thus, it follows from \eqref{Eq:contra} that $f_{n,a,p}(\lambda)>0$, which contradicts the fact that $\lambda$ is a zero of $f_{n,a,p}(x)$. Hence, there exists some zero $\alpha\ne \lambda$ of $f_{n,a,p}(x)$ with $\abs{\alpha}>1$, and we conclude that $\lambda$ is not a Pisot number, which completes the proof of the theorem. 
\end{proof}



\end{document}